\theoremstyle{plain}
\newtheorem{theorem}{Theorem}[section]
\theoremstyle{definition}
\newtheorem*{theorem*}{Theorem}
\providecommand{\customgenericname}{}
\newcommand{\newcustomtheorem}[2]{%
\newenvironment{#1}[1]
{%
\renewcommand\customgenericname{#2}%
\renewcommand\theinnercustomgeneric{##1}%
\innercustomgeneric
}
{\endinnercustomgeneric}
}
\newtheorem*{proposition*}{Proposition}
\newtheorem*{remark*}{Remark}
\newtheorem*{corollary*}{Corollary}
\DeclareMathOperator{\R}{\mathbb{R}}
\renewcommand{\d}{\mathrm{d}}
\newcommand{\eps}{\varepsilon}
\newcommand{\vertiii}[1]{{\left\vert\kern-0.25ex\left\vert\kern-0.25ex\left\vert #1 \right\vert\kern-0.25ex\right\vert\kern-0.25ex\right\vert}}
\begin{document}
\title{Banach-Saks Theorem for $L^1$ revisited}

\author[Foghem]{Guy Foghem$^{\dagger}$}


\address{ {\small Fakult\"{a}t f\"{u}r Mathematik Institut f\"{u}r Analysis, TU Dresden Zellescher Weg 23/25, 01217, Dresden, Germany. Email: guy.foghem[at]tu-dresden.de}}

\thanks{$^{\dagger}$The author is supported by the Deutsche Forschungsgemeinschaft/German Research Foundation (DFG) via the Research Group 3013: "Vector-and Tensor-Valued Surface PDEs"}

\begin{abstract}
The  Banach-Saks property is an important tool in analysis with applications ranging from partial differential equations (PDEs) to calculus of variations and probability theory. We survey the Banach-Saks property for $L^p$-spaces, with a particular emphasis on the case where $p=1$. In other words, we revisit the celebrated result by W. Szlenk (1965) in a more general context, demonstrating that $L^1$-spaces possess the weak Banach-Saks property.
\end{abstract}
\keywords{Banach-Saks property, weak convergence, $L^p$-spaces}

\makeatletter
\@namedef{subjclassname@2020}{\textup{2020} Mathematics Subject Classification}
\makeatother

\subjclass[2020]{
28A20, 
46B10, 
46B50, 
46E30 
}

\maketitle 
\vspace{-5mm}

\section{Introduction}
A Banach space $X$ satisfies the Banach-Saks property (resp. the weak Banach-Saks property) if  every bounded sequence $(x_n)_n\subset X$  (resp. weakly converging to $x$ in $X$) admits a subsequence $(x_{n_j})_j$ strongly converging in the Ces\'{a}ro sense, that is,  $\|\overline{x}_{n_j}-x\|_X\xrightarrow{j\to\infty}0$, $x\in X$ where 
\vspace{-2mm}
\begin{align*}
\overline{x}_{n_j}=\frac{1}{j}\sum_{k=1}^j x_{n_k}. 
\end{align*}
This variation in the definition irrelevant  if $X$ is reflexive. Namely, in a reflexive Banach space the Banach-Saks property and the weak Banach-Saks property are equivalent. For a general Banach space, because weak converging sequence are bounded,  the weak Banach-Saks property is implied by the Banach-Saks property but the converse is  not always true. Notable examples of Banach spaces not satisfying the Banach-Saks property include $L^1-$spaces, this is because they are not reflexive.  Indeed, a result of T. Nishiura and D. Waterman \cite{NiWa63} asserts that a Banach space satisfying the  Banach-Saks property is automatically reflexive (interestingly, the converse is not true, as constructions of reflexive Banach spaces not satisfying the Banach-Saks property are provided by B. Beauzamy and A. Baernstein  in \cite{Bae72, Bea79}).  However, it was recognized by Szlenk \cite{Szl65} that $L^1(0,1)$ enjoys the weak Banach-Saks property. It turns out that $L^1-$spaces are perfect examples of a non-reflexive Banach space satisfying the weak Banach-Saks property. The aim of this note is to address the weak Banach Saks property of $L^1-$spaces in the general context. From now on, we write $L^p(X)$, $1\leq p\leq\infty$ in the sequel to tacitly denote the usual Lebesgue spaces  associated with on a measure space $(X,\mathcal{A}, \mu)$, i.e., $\mathcal{A}$ is a $\sigma$-algebra on a set $X$ and $\mu$ is a positive measure on $\mathcal{A}$.  We say that a sequence $(u_n)_n\subset L^1(X)$ converges weakly to $u$ in $L^1(X)$  and we write $u_{n}\rightharpoonup u$ if 
\begin{align*}
( v, u_n-u)_{(L^1(X))', L^1(X)}\xrightarrow{n\to\infty}0 
\quad \text{for all $v\in (L^1(X))'$},
\end{align*}
where $(\cdot, \cdot)_{(L^1(X))', L^1(X)}$ is the dual paring  between $L^1(X)$ and its dual $(L^1(X))'$. In passing, we recall the well-known fact from the Riesz representation for $L^1(X)$ 
(see for instance \cite[Corollary 2.41 \& Remark 2.42]{FoLe07} or the more recent proof in \cite{Shi18}), viz., if $\mu$ is $\sigma$-finite then we can identify $(L^1(X))'\equiv L^\infty(X)$. In the latter case, the aforementioned weak convergence $L^1(X)$ boils down to the following condition 
\begin{align*}
\int_X u_n(x) v(x)\d \mu (x)\xrightarrow{n\to\infty}	\int_X u(x) v(x)\d \mu (x)\quad \text{for all $v\in L^\infty(X)$}. 
\end{align*}
\begin{theorem}\label{thm:banach-saks-l1} The space $L^1(X)$ enjoys the weak  Banach-Saks property, i.e.,  for any sequence  $u_n\rightharpoonup u$ weakly in $ L^1(X)$, there is a subsequence $(u_{n_j})_j$ such that  $\|\overline{u}_{n_j}-u\|_{L^1(X)}\xrightarrow{j\to\infty}0$. 
\end{theorem}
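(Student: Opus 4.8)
The plan is to reduce to a normalized, countably supported situation, then exploit the Dunford--Pettis characterization of weak compactness in $L^1$ together with the (elementary) Banach--Saks property of the Hilbert space $L^2$ on sets of finite measure. First I would carry out two reductions. Since the maps $u_n\mapsto{}$(weak limit) and $u_n\mapsto\overline{u}_n$ are affine, replacing $u_n$ by $u_n-u$ lets me assume $u=0$, i.e. $u_n\rightharpoonup 0$. Moreover each $u_n$ and $u$ are integrable, so $X_0=\{u\neq 0\}\cup\bigcup_n\{u_n\neq 0\}$ is $\sigma$-finite and carries all the relevant mass; restricting to $X_0$ I may assume $(X,\mathcal A,\mu)$ is $\sigma$-finite, identify $(L^1(X))'\equiv L^\infty(X)$, and test weak convergence against $L^\infty(X)$.

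The structural input is uniform integrability. A weakly convergent sequence is relatively weakly compact, so by the \emph{Dunford--Pettis theorem} the family $\{u_n\}$ is uniformly integrable and tight: for every $\tau>0$ there exist $M<\infty$ and a set $E$ with $\mu(E)<\infty$ such that $\sup_n\int_{\{|u_n|>M\}}|u_n|\,\d\mu\le\tau$ and $\sup_n\int_{X\setminus E}|u_n|\,\d\mu\le\tau$. This is the sole place where the special structure of $L^1$ enters, and it is precisely what compensates for the lack of reflexivity.

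Next I would establish a fixed-level estimate. Fix $\tau>0$ with its associated $M,E$, and split $u_n=g_n+r_n$, where $g_n=u_n\,\mathbf{1}_{E}\,\mathbf{1}_{\{|u_n|\le M\}}$ is supported in $E$ and bounded by $M$, while $\|r_n\|_{L^1(X)}\le 2\tau$. The $g_n$ form a bounded sequence in the Hilbert space $L^2(E)$ (here $\mu(E)<\infty$), so by the Banach--Saks property of $L^2$ I pass to a subsequence along which the Cesàro means of $g_{n_k}$ converge in $L^2(E)$, hence in $L^1(E)$, to the weak $L^2$-limit $\ell$ of $(g_{n_k})$. Testing $\ell$ against $\phi\in L^\infty(E)$ and using $u_n\rightharpoonup 0$ together with $\|u_n\mathbf{1}_E\mathbf{1}_{\{|u_n|>M\}}\|_{L^1}\le\tau$ gives $\int_E\ell\,\phi\le\tau\|\phi\|_{L^\infty}$, hence $\|\ell\|_{L^1(E)}\le\tau$ (take $\phi=\mathrm{sgn}\,\ell$). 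Combining the three contributions yields a subsequence with $\limsup_j\|\frac1j\sum_{k\le j}u_{n_k}\|_{L^1(X)}\le 3\tau$. The crucial point is that routing the bounded part through genuine $L^2$ Banach--Saks, so that its average converges to the true weak limit $\ell$ (which is $L^1$-small), avoids the loss of a factor $M$ that a crude cross-term estimate would incur.

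Finally, since the estimate above applies verbatim to any subsequence, I take $\tau_m=2^{-m}$ and iterate on nested subsequences, assembling a single subsequence from consecutive blocks whose lengths grow fast enough that the partial sums of all earlier blocks are negligible against the length of the current average; a routine estimate then gives $\|\overline{u}_{n_j}\|_{L^1(X)}\to 0$. I expect the main obstacle to be conceptual rather than computational, and to lie in the two preceding paragraphs: namely, recognizing that uniform integrability is exactly the substitute for reflexivity, and extracting the bounded part so that it both keeps the tail uniformly small \emph{and} forces the $L^2$-weak limit of the averages to be $L^1$-small. The concluding diagonal/block selection is standard.
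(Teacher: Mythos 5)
Your $\eps$-level estimate is, up to cosmetic differences, exactly the paper's key lemma (Theorem \ref{thm:eps-uniform-cesaro}): the same Dunford--Pettis splitting of $u_n$ into a piece bounded by $M$ and supported on a finite-measure set $E$ (routed through the Hilbert-space Banach--Saks theorem in $L^2(E)$) plus an $L^1$-small remainder, and the same mechanism for showing the weak limit $\ell$ of the bounded piece has $\|\ell\|_{L^1}\le\tau$ (the paper phrases it as weak lower semicontinuity applied to $v_{n_j}\rightharpoonup -w$; your testing against $\phi=\mathrm{sgn}\,\ell$ is the same computation). That part is correct and is where the real content lies.

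The gap is in your final paragraph. Knowing that for each $\tau$ \emph{some} subsequence satisfies $\limsup_j\|\tfrac1j\sum_{k\le j}u_{n_k}\|_{L^1}\le 3\tau$ does not yield the theorem by a block selection in which only the block \emph{lengths} are tuned. Write $J_{m-1}=L_1+\dots+L_{m-1}$ and let $\Sigma_i$ denote the (unnormalized) sum of the terms in block $i$. At the first index of block $m$, i.e.\ $j=J_{m-1}+1$, the Ces\`aro mean is essentially $\tfrac1{J_{m-1}}\sum_{i<m}\Sigma_i$, and the trivial bound $\|\Sigma_i\|_{L^1}\le L_i r$ gives only $\le r$; no choice of lengths improves this. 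What you actually need is that each block sum has small average, $\|\Sigma_i\|_{L^1}\lesssim \tau_i L_i$. But your hypothesis controls only \emph{initial} Ces\`aro sums $\tfrac1s\sum_{k=1}^{s}u_{(S_i)_k}$ of the $i$-th subsequence, whereas block $i$ is necessarily a window $\sum_{k=p_i}^{p_i+L_i-1}$ starting at some position $p_i\ge1$ forced on you by the requirement that the original indices increase across blocks. This instability of Ces\`aro means under re-indexing is precisely what the paper neutralizes by first proving the \emph{uniform} statement (Theorem \ref{thm:uniform-weak-banach-saks}, with the supremum over all strictly increasing $\theta$) and only then diagonalizing. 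Your construction can be repaired without the uniform version: a window sum is a difference of two initial Ces\`aro sums, so $\|\sum_{k=p}^{q}u_{(S_i)_k}\|_{L^1}\le (q+p)\,\sup_{s\ge p-1}\|\tfrac1s\sum_{k=1}^{s}u_{(S_i)_k}\|_{L^1}$ once $p-1$ exceeds the threshold where the $\limsup$ bound has kicked in; since the nesting forces $p_i\le J_{i-1}+1$, fixing the whole nested family first and then choosing $L_i$ large relative to $J_{i-1}$ and to the threshold of $S_i$ closes the argument. That estimate, however, is the actual content of the concluding step; it is not supplied by ``lengths grow fast enough.''
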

\noindent Interestingly, Theorem \ref{thm:banach-saks-l1} is reminiscent of the renowned result by W. Szlenk \cite{Szl65}, who originally established Theorem \ref{thm:banach-saks-l1} for the space $L^1(0,1)$ endowed with the Lebesgue measure. Notably, the argument of W. Szlenk \cite{Szl65} carries out to the space $L^1(X)$ when the measure $\mu$ is finite, i.e., $\mu(X)<\infty$. It is worth emphasizing, however, that we do not impose any restrictions on the measure $\mu$. In fact, Theorem \ref{thm:banach-saks-l1} is a straightforward consequence of the following Theorem \ref{thm:uniform-weak-banach-saks} which is more general. 
\begin{theorem}\label{thm:uniform-weak-banach-saks}
Assume $u_n\rightharpoonup u$  weakly in $L^1(X)$. There is a subsequence $(u_{n_j})_j$ such that 
\begin{align*}
\sup_{\stackrel{\theta: \mathbb{N}\to\mathbb{N},\, s.t. }{\theta(\tau)<\theta(\tau+1)}} \Big\| \frac{1}{j} \sum_{k=1}^j u_{n_{\theta(k)}}-u\Big\|_{L^1(X)}\xrightarrow{j\to\infty}0. 
\end{align*}
The supremum is performed over all strictly increasing mapping $\theta\,:\mathbb{N}\to \mathbb{N}$. 
\end{theorem}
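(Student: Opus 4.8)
The plan is to reduce to a tractable setting and then run a quantitative extraction in which \emph{every} condition imposed on the subsequence is \emph{pairwise}, so that it is automatically inherited by each increasing reindexing $\theta$. First I would replace $u_n$ by $u_n-u$ to assume $u=0$, and restrict attention to $S=\bigcup_n\{u_n\neq 0\}$, which is $\sigma$-finite (a countable union of $\sigma$-finite supports); on $S$ we may identify $(L^1)'\equiv L^\infty$, so testing against $L^\infty$ is legitimate. Since $(u_n)_n$ converges weakly it is relatively weakly compact, hence by the Dunford--Pettis theorem it is bounded, uniformly integrable, and uniformly tight. Fixing an exhaustion $E_l\uparrow S$ with $\mu(E_l)<\infty$ and heights $M_l\uparrow\infty$, the quantities $\alpha_l=\sup_n\int_{\{|u_n|>M_l\}}|u_n|\,\d\mu$ and $\beta_l=\sup_n\int_{S\setminus E_l}|u_n|\,\d\mu$ both tend to $0$.

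Next I would localise and truncate at each scale $l$ by setting $g^l_n=u_n\,\mathbf 1_{E_l\cap\{|u_n|\le M_l\}}$, so that $\|u_n-g^l_n\|_{L^1}\le\alpha_l+\beta_l$ uniformly in $n$, while each $g^l_n$ is bounded in the Hilbert space $L^2(E_l)$ by $M_l\,\mu(E_l)^{1/2}$. The first genuine difficulty is that $g^l_n$ is \emph{not} weakly null in $L^1$: the discarded high part contaminates the weak limit by an amount of order $\alpha_l$. I would circumvent this by extracting (diagonally over $l$) a subsequence along which $g^l_n\rightharpoonup h_l$ weakly in $L^2(E_l)$ for every $l$, and then noting that the same contamination estimate forces $\|h_l\|_{L^1(E_l)}\le\alpha_l$, so that $h_l$ is harmless while $\tilde g^l_n:=g^l_n-h_l$ is genuinely weakly null in $L^2(E_l)$.

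The heart of the argument is a single inductive extraction producing $n_1<n_2<\cdots$ such that, for all $l\le k$ and all $i<k$, one has $\big|\int_{E_l}\tilde g^l_{n_i}\,\tilde g^l_{n_k}\,\d\mu\big|\le 2^{-k}$; at each step only finitely many inner products must be made small, which is possible precisely because $\tilde g^l_m\rightharpoonup 0$ in $L^2(E_l)$. Because this bound is purely pairwise and $\theta(b)\ge b$ for every strictly increasing $\theta$, the off-diagonal sum in $\big\|\tfrac1j\sum_{k=1}^j\tilde g^l_{n_{\theta(k)}}\big\|_{L^2(E_l)}^2$ is dominated by $\sum_b (b-1)2^{-b}<\infty$ \emph{independently of} $\theta$, while the diagonal contributes $O(1/j)$; hence $\sup_\theta\big\|\tfrac1j\sum_{k=1}^j\tilde g^l_{n_{\theta(k)}}\big\|_{L^2(E_l)}\to 0$ as $j\to\infty$, for each fixed $l$. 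Assembling the pieces through $\|\cdot\|_{L^1(E_l)}\le\mu(E_l)^{1/2}\|\cdot\|_{L^2(E_l)}$ and the triangle inequality yields, for every $\theta$, the bound $\limsup_{j}\big\|\tfrac1j\sum_{k=1}^j u_{n_{\theta(k)}}\big\|_{L^1}\le 2\alpha_l+\beta_l$, the factor $\mu(E_l)^{1/2}$ being harmless since at a fixed scale it multiplies a quantity tending to $0$. Letting $l\to\infty$ and using $\alpha_l,\beta_l\to 0$ then closes the proof.

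I expect the main obstacles to be exactly the two points above: (i) the loss of weak nullity under truncation, which I would handle by passing to $L^2(E_l)$ and subtracting the small-$L^1$ remainder $h_l$; and (ii) the uniformity over all subsequences $\theta$, which I would secure by insisting that the only requirements in the construction are pairwise $L^2(E_l)$-orthogonality estimates, combined with the elementary inequality $\theta(b)\ge b$.
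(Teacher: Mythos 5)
Your proof is correct. The ingredients are the same as the paper's: the Dunford--Pettis theorem to obtain uniform integrability and tightness, truncation of large values and far tails so that the remainder lives in a Hilbert space, a Szlenk-type inductive extraction enforcing pairwise almost-orthogonality, and the elementary observation that $\theta(b)\ge b$ transfers pairwise bounds to arbitrary increasing reindexings. The difference is organizational, and it buys real economy. The paper first proves an auxiliary statement (Theorem \ref{thm:eps-uniform-cesaro}): for each fixed $\eps>0$ a single truncation scale yields $\limsup_j\sup_\theta\big\|\tfrac1j\sum_{k=1}^j u_{n_{\theta(k)}}\big\|_{L^1}\le\eps$; it then extracts nested subsequences for $\eps=1/i$ and passes to a diagonal sequence, and verifying that the diagonal sequence inherits the $\sup_\theta$ bound at every level is the most delicate step of the paper's argument (it requires tracking compositions of reindexing maps $\theta^\ell_k$). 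You collapse this two-stage scheme into a single extraction by imposing the scale-$l$ orthogonality conditions only pairwise and only from index $l$ onward; since $\theta(b)\ge b$, every increasing reindexing automatically respects that indexing, so no diagonalization over nested subsequences is needed and the finitely many exceptional pairs with $b<l$ contribute only $O_l(1/j^2)$. Your treatment of the contaminated weak limit $h_l$, with $\|h_l\|_{L^1(E_l)}\le\alpha_l$ deduced by testing against $L^\infty(E_l)$, exactly parallels the paper's treatment of the limit $w$ of $u_{n_j}-v_{n_j}$, where $\|w\|_{L^1}<\eps/3$ follows from weak lower semicontinuity and $v_{n_j}\rightharpoonup -w$; both devices resolve the same loss of weak nullity under truncation.
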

Our proof of the theorem \ref{thm:uniform-weak-banach-saks} is based on the Dunford-Pettis characterization of the weak convergence in $L^1(X)$  and a refinement of the arguments of Szlenk's  proof \cite{Szl65}. Let us comment on some related works in the literature. The Banach-Saks phenomenon was first established by S. Banach and S. Saks \cite{BaSa30} for the space $L^p(0, 1)$, $1<p<\infty$.  For the convenience of the reader, we present their proof for  $L^p(X)$, $1<p<\infty$, in Appendix \ref{sec:Appendix} (Theorem \ref{thm:banach-saks-lp}). This  result was subsequently extended to a uniform convex space by S. Kakutani \cite{Kak39} (see the proof in \cite[P.124]{Die84}) and N. Okada \cite{Oka84} who proved that a Banach space  whose dual  is uniformly convex also features  the Banach-Saks property (see Theorem \ref{thm:banach-saks-Okada} and its proof below). As mentioned earlier, due to the lack of reflexivity, the Banach-Saks property fails in general\footnote{In some pathological cases $L^1(X)$ might be reflexive and enjoy the Banach-Saks property as well. A blatant instance is obtained by considering $L^1(X_d, \mu)\equiv \R^d$ where $\mu$ is the counting measure on $X_d=\{1,2,\cdots,d\}$ and obviously $\|u\|_{L^1(X_d,\mu)}=\sum_{i=1}^{d}|u(i)|$.} for the space $L^1(X)$, but it was shown by W. Szlenk \cite{Szl65} that $L^1(0,1)$ rather satisfies the weak Banach-Saks property. As a matter of fact, an intriguing anecdote concerning the Banach-Saks phenomenon is related to the original work of Banach and Saks \cite{BaSa30}. Indeed, Banach and Saks claimed the failure of the weak Banach property for $L^1(0,1)$ and also claimed to have generated a weakly null sequence in $L^1(0,1)$  without any subsequences having strongly converging in the Ces\'aro sense. Later, the proof  of W. Szlenk \cite{Szl65} however, revealed the error in the assertion of  Banach and Saks \cite{BaSa30}.  For the sake of completeness, it is important to mention that in the case $p=\infty$, even the weak Banach-Saks property fails in general for the space $L^\infty(X)$ and especially for  the space $C[0,1]$. This was first established by J. Schreier  in \cite{Sch30} and later extended by  N. Farnum in \cite{Far74} for general spaces $C(S)$ where $S$ is a metric space. The result by W. Szlenk \cite{Szl65} sparkled significant interests in the area of probability theory, where one sometime  wishes to have the pointwise convergence almost everywhere of random variables instead of strong convergence.  The first step in this direction, attributed to J. Koml\'{o}s \cite{Kom67} (see a recent proof in \cite[Theorem 4.7.24]{Bog07}) infers  that \textit{if $\mu(X)<\infty $, then  a bounded sequence $(u_n)_n$ in  $L^1(X)$  admits a subsequence  $(u_{n_j})_j$ and $u\in L^1(X)$ such that  for all strictly increasing mapping $\theta\,:\mathbb{N}\to \mathbb{N}$,  the sequence $(u_{n_{\theta(j)}})_j$ converges  to $u$ almost everywhere in the Cesàro sense, that is, $\overline{u}_{n_{\theta(j)}}\to u$ almost everywhere in $X$}. This result was improved  by D. Aldous in \cite{Ald77}.  Much later, I. Berkes \cite{Berk90} extended the result of J.  Koml\'{o}s \cite{Kom67} and D. Aldous in \cite{Ald77}  to the space $L^p(X)$, $1\leq p<\infty$ with $\mu(X)<\infty$; see  \cite[Theorem 29, P. 102]{Woj91} for a detailed proof.  
Last but certainly not least, the weak Banach-Saks property represents an enhancement of the sequential Mazu's  lemma \cite[P. 6]{ET76}, which asserts that any weakly convergent sequence in a normed space admits a sequence of convex combinations of its members that converges strongly to the same limit. However, the major limitation of this result is that, because it uses the Hahn-Banach theorem, the convex combinations are not explicitly determined.

\vspace{-2mm}

\section{Proof of the main result}
\noindent Analogously to Theorem \ref{thm:uniform-weak-banach-saks},  Hilbert spaces satisfy a stronger notion  called the uniform Banach-Saks property. The proof is adapted from those of  \cite{Szl65,RiSz90}.   
\begin{theorem}\label{thm:banach-saks-hilbert}
A Hilbert space $(H, (\cdot,\cdot)_H)$ satisfies the uniform Banach-Saks property, i.e., every bounded sequence $(x_n)_n\subset H$ admits a subsequence $(x_{n_j})_j$ and $x\in H $ such that 
\begin{align*}
\lim_{j\to\infty}	\sup_{\stackrel{\theta: \mathbb{N}\to\mathbb{N},\, s.t. }{\theta(\tau)<\theta(\tau+1)}} \Big\| \frac{1}{j} \sum_{k=1}^j x_{n_{\theta(k)}}-x\Big\|_{H}=0.
\end{align*}
In particular, we have $\| \overline{x}_{n_{j}}-x\|_{H}\xrightarrow{j\to\infty}0.$
\end{theorem}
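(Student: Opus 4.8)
The plan is to reduce to a weakly null sequence and then build, by a greedy inductive selection, a subsequence whose pairwise inner products decay fast enough that the averaging estimate survives being restricted to \emph{any} further subsequence. First I would use that $H$ is reflexive: since $(x_n)_n$ is bounded, weak sequential compactness (Banach--Alaoglu together with Eberlein--\v{S}mulian) furnishes a subsequence converging weakly to some $x\in H$, which after relabelling I take to be $x_n\rightharpoonup x$. Setting $z_n:=x_n-x$ I may then assume the sequence is weakly null, and $\|z_n\|_H\le 2M$ where $M:=\sup_n\|x_n\|_H$, because weak lower semicontinuity of the norm gives $\|x\|_H\le\liminf_n\|x_n\|_H\le M$. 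Since $\frac1j\sum_k z_{n_{\theta(k)}}=\frac1j\sum_k x_{n_{\theta(k)}}-x$, controlling the norms of the recentred means is exactly what the theorem demands.

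Next I would construct the subsequence inductively. Having chosen indices $m_1<\dots<m_{k-1}$ and set $y_i:=z_{m_i}$, I invoke $z_m\rightharpoonup 0$, so that $(y_i,z_m)_H\to 0$ as $m\to\infty$ for each fixed $i$, to pick $m_k>m_{k-1}$ so large that $|(y_i,z_{m_k})_H|<\tfrac1k$ simultaneously for all $i<k$; this is possible because only finitely many conditions are imposed at each step. Put $y_k:=z_{m_k}$. The outcome is a subsequence $(y_k)_k$ of $(z_n)_n$ with the key property
\begin{align*}
|(y_a,y_b)_H|<\tfrac1b\qquad\text{whenever } a<b,
\end{align*}
the crucial feature being that the bound is governed by the \emph{larger} index.

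Finally I would establish the uniform estimate by expanding the square: for any strictly increasing $\theta:\mathbb N\to\mathbb N$,
\begin{align*}
\Big\|\frac1j\sum_{k=1}^j y_{\theta(k)}\Big\|_H^2=\frac{1}{j^2}\sum_{k=1}^j\|y_{\theta(k)}\|_H^2+\frac{2}{j^2}\sum_{1\le i<k\le j}(y_{\theta(i)},y_{\theta(k)})_H.
\end{align*}
The diagonal part is at most $4M^2/j$ irrespective of $\theta$. For each off-diagonal pair one has $\theta(i)<\theta(k)$, hence $|(y_{\theta(i)},y_{\theta(k)})_H|<1/\theta(k)$; using that every strictly increasing $\theta$ obeys $\theta(k)\ge k$, the cross term is bounded by $\frac{2}{j^2}\sum_{k=2}^{j}(k-1)/\theta(k)\le \frac{2}{j^2}\sum_{k=2}^j (k-1)/k\le 2/j$. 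Therefore $\sup_\theta\big\|\frac1j\sum_{k=1}^j y_{\theta(k)}\big\|_H^2\le (4M^2+2)/j\to 0$, which is the assertion, the final ``in particular'' being the case $\theta=\mathrm{id}$.

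The point requiring the most care is the \emph{uniformity}: one must produce a single subsequence that works for all $\theta$ at once, rather than a $\theta$-dependent one. The mechanism that achieves this is precisely that the greedy construction keys the inner-product bound to the larger index $b$, and any strictly increasing $\theta$ satisfies $\theta(k)\ge k$; consequently passing to a further subsequence can only improve the pairwise estimates relative to their positions, so the same decay controls every $\theta$ simultaneously. Beyond this observation the computation is the classical Hilbert-space Banach--Saks argument, and I do not anticipate any further obstacle.
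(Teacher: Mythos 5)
Your proposal is correct and follows essentially the same route as the paper's proof: a greedy inductive selection making the pairwise inner products decay like the reciprocal of the larger index, followed by expanding the squared norm of the Ces\`aro sum and using that every strictly increasing $\theta$ satisfies $\theta(k)\ge k$ to get a bound uniform in $\theta$. The only differences are cosmetic (the paper normalizes to $x=0$ at the outset and uses the threshold $1/(j+1)$ instead of $1/k$).
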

\begin{proof}
A bounded sequence $(x_n)_n\subset H$  say $\sup_{n\geq1}\|x_n\|_H\leq r$, for some $r>0$, has a weak converging subsequence. Without loss of generality, we assume that  $(x_n)_n\subset H$ weakly converges to $x$ in $H$  and that $x=0$. Put $x_{n_1}=x_1$ assume $x_{n_{j-1}}$ is given, $j\geq2$. Since $(x_n)_n$ converges weakly to $x=0$, we choose $n_j>n_{j-1}$ such that 
\begin{align*}
|(x_{n_k}, x_{n_j})_H|\leq \frac{1}{j+1}\quad \text{for every $k=1,2,\cdots, j-1$}.
\end{align*}
A strictly increasing $\theta:\mathbb{N}\to\mathbb{N}$ satisfies $\theta(j)\geq j$. By construction, the sequence $(x_{n_{\theta(j)}})_j$ satisfies 

\begin{align*}
\Big\|\sum_{k=1}^jx_{n_{\theta(k)}}\Big\|^2_H
&= \sum_{k=1}^j\|x_{n_{\theta(k)}}\|^2_H+ 2  \sum_{i=2}^{j} \sum_{k=1}^{i-1}(x_{n_{\theta(k)}}, x_{n_{\theta(i)}} )_H\\
&\leq jr^2+ 2\sum_{i=2}^{j}\frac{i-1}{\theta(i)+1}\leq jr^2+ 2j.
\end{align*}
Finally, the sought result follows since we obtain 
\begin{align*}
\sup_{\stackrel{\theta: \mathbb{N}\to\mathbb{N},\, s.t. }{\theta(\tau)<\theta(\tau+1)}} \Big\|\frac{1}{j}\sum_{k=1}^jx_{n_{\theta(k)}}\Big\|^2_H \leq \frac{r^2+ 2}{j}\xrightarrow{j\to\infty}0.
\end{align*}
\end{proof}
\vspace{-2mm}
\noindent Next, we need the Dunford-Pettis criterion for weak compactness in $L^1(X)$. This criterion was originally established by N. Dunford and B. Pettis in \cite{Dun39, DuPe40}. A more contemporary version of the Dunford-Pettis theorem, credited to L. Ambrosio, N. Fusco and D. Pallara \cite{AFP00} with a meticulous proof can be found in \cite[Theorem 2.54]{FoLe07}; see  also the versions in \cite[Theorem 4.7.18 \&  4.7.20]{Bog07}. To facilitate the statement of the result, it is convenient to recall the notions of tightness and uniform integrability. 
Let $\mathcal{F}\subset L^1(X)$ be a subset. The set  $\mathcal{F}$ is uniformly integrable (or equiintegrable) if
\begin{align*}
\lim_{\mu(E)\to 0} \sup_{u\in \mathcal{F}} \int_E |u(x)|\,\d \mu(x)= 0.
\end{align*}
That is, to be strict, for every $\varepsilon>0$ there is $\delta>0$ such that for a measurable set $E\in \mathcal{A}$ with  $\mu(E)<\delta$, 
\begin{align*}
\int_E |u(x)|\,\d \mu(x)<\varepsilon\qquad\textrm{for all}\quad u\in \mathcal{F}.
\end{align*}
The  set $\mathcal{F}$ is tight if 
\begin{align*}
\inf_{\mu(E) <\infty} \sup_{u\in \mathcal{F}} \int_{X\setminus E} |u(x)|\,\d \mu(x)= 0.
\end{align*}
That is, for every $\varepsilon>0$ there exists a measurable set $E$ such that $0< \mu(E)<\infty$ and 
\begin{align*}
\int_{X\setminus E} |u(x)|\,\d \mu(x)<\varepsilon\qquad\textrm{for all}\quad u \in \mathcal{F}.
\end{align*}

\begin{theorem}[Dunford-Pettis]\label{thm:dunford-petit}
For sequence $(u_n)_n\subset L^1(X)$, the following assertions are equivalent. 
\begin{enumerate}[$(i)$]
\item The sequence $(u_n)_n$ is relatively weakly compact in $L^1(X)$. 
\item The sequence $(u_n)_n$ is bounded in $L^1(X)$, uniformly integrable and tight. 
\end{enumerate}
\end{theorem}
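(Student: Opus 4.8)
The plan is to establish the two implications separately, using throughout the Eberlein--\v{S}mulian theorem to replace relative weak compactness by relative weak sequential compactness, and the elementary observation that for every measurable $E$ one has $\mathbf 1_E\in L^\infty(X)\subseteq (L^1(X))'$, so that any weakly convergent sequence $u_n\rightharpoonup u$ induces the \emph{setwise}-convergent finite measures $\nu_n(E)=\int_E u_n\,\d\mu\to \nu(E)=\int_E u\,\d\mu$. The true engine of both directions will be the Vitali--Hahn--Saks theorem (itself proved by a Baire category argument on the measure algebra), which turns pointwise-in-$E$ convergence of $\mu$-continuous measures into \emph{uniform} $\mu$-continuity and uniform countable additivity.

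For the necessity $(i)\Rightarrow(ii)$: boundedness is immediate, since a relatively weakly compact set is weakly bounded and hence norm bounded by the uniform boundedness principle. For uniform integrability I would argue by contradiction: were it to fail, there would be $\varepsilon_0>0$, sets $E_k$ with $\mu(E_k)\to 0$, and indices $n_k$ with $\int_{E_k}|u_{n_k}|\ge\varepsilon_0$; by Eberlein--\v{S}mulian I may assume $u_{n_k}\rightharpoonup u$, so the $\mu$-continuous measures $\nu_{n_k}$ converge setwise, and Vitali--Hahn--Saks forces them to be uniformly $\mu$-continuous. Splitting each $E_k$ into the positivity and negativity sets of $u_{n_k}$ upgrades this to $\lim_{\mu(E)\to 0}\sup_k\int_E|u_{n_k}|=0$, contradicting the choice of $E_k$. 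Tightness follows the same pattern: collecting the ($\sigma$-finite) supports of all $u_n$ and of $u$ into one $\sigma$-finite set $A=\bigcup_m A_m$ with $A_m\uparrow A$ of finite measure, the uniform countable additivity from Vitali--Hahn--Saks gives $\sup_n\int_{A\setminus A_m}|u_n|\to 0$, which is tightness since the $u_n$ vanish off $A$.

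For the sufficiency $(ii)\Rightarrow(i)$ the argument is constructive and proceeds in two stages. First I use tightness to localize: choosing finite-measure sets $E_k$ with $\sup_n\int_{X\setminus E_k}|u_n|<1/k$ and setting $A=\bigcup_k E_k$, each $u_n$ vanishes a.e.\ on $X\setminus A$, so the problem descends to the $\sigma$-finite carrier $A$, where $(L^1(A))'=L^\infty(A)$. Writing $A=\bigcup_m A_m$ with $A_m=E_1\cup\cdots\cup E_m\uparrow A$, so that $\sup_n\int_{A\setminus A_m}|u_n|\le 1/m$, I truncate in both value and domain by $w_n^{k,m}=\operatorname{sign}(u_n)\min(|u_n|,k)\,\mathbf 1_{A_m}$; each family $(w_n^{k,m})_n$ is bounded in the Hilbert space $L^2(A_m)$, so a diagonal extraction over the countably many pairs $(k,m)$ yields a subsequence $(u_{n_j})$ along which every $w_{n_j}^{k,m}$ converges weakly. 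Uniform integrability controls the value-truncation error and the uniform tightness on $A$ controls the domain-truncation error, both uniformly in $j$, which promotes $(u_{n_j})$ to a weakly Cauchy sequence in $L^1(X)$.

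The main obstacle is the very last step: a weakly Cauchy sequence need not converge in a general Banach space, so one must genuinely produce the limit $u\in L^1(X)$. I would do this by returning to the induced measures: $\nu(E):=\lim_j\int_E u_{n_j}$ exists for every $E$ and is finitely additive, Vitali--Hahn--Saks makes $\nu$ countably additive and $\mu$-continuous, and the Radon--Nikodym theorem then supplies a density $u\in L^1(X)$ with $u_{n_j}\rightharpoonup u$; this is precisely the weak sequential completeness of $L^1$. Thus the real difficulty is concentrated in this completeness/uniformity phenomenon, whereas the one feature that distinguishes this statement from the classical finite-measure version—the absence of $\sigma$-finiteness—is disarmed at the outset by exploiting tightness to confine all the functions to a single $\sigma$-finite set.
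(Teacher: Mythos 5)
The paper does not actually prove this theorem: it is quoted as a known result, with the proof deferred to the cited literature (\cite[Theorem 2.54]{FoLe07} and \cite[Theorems 4.7.18, 4.7.20]{Bog07}), so there is no in-paper argument to compare yours against. Your outline is the classical Dunford--Pettis proof driven by the Vitali--Hahn--Saks theorem, and it is essentially correct and consistent with those references: the reduction of the non-$\sigma$-finite case to a $\sigma$-finite carrier via tightness, the two-parameter truncation with a diagonal extraction in the Hilbert space $L^2(A_m)$, the promotion to a weakly Cauchy sequence by uniform control of the truncation errors, and the identification of the limit through Radon--Nikodym (i.e.\ weak sequential completeness of $L^1$) followed by Eberlein--\v{S}mulian are all the right moves.

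Two points deserve more care than your sketch gives them. First, Vitali--Hahn--Saks is normally stated over a finite or $\sigma$-finite control measure, so in the necessity direction you should restrict to the $\sigma$-finite union of the supports $\{u_n\neq 0\}$ \emph{before} invoking it, not only in the tightness step. Second, and more substantively, the positivity/negativity splitting is not symmetric between your two uses of it: uniform $\mu$-continuity quantifies over \emph{all} sets of small measure simultaneously, so it survives intersecting with the $n$-dependent sets $\{u_{n_k}>0\}$ and the uniform integrability argument closes cleanly; but uniform countable additivity is quantified over a single decreasing sequence, whereas the sets $(A\setminus A_m)\cap\{u_n>0\}$ form an $n$-dependent array. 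Passing from uniform countable additivity of the signed measures $\nu_n$ to that of their total variations $|\nu_n|$ (equivalently, to $\sup_n|\nu_n|(F_i)\to 0$ along disjoint sequences $(F_i)$) requires an additional, standard but nontrivial, disjointification lemma. Neither point is a genuine gap---both are handled in the cited sources---but they are where a written-out version of your proof would have to do real work.
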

\noindent A fundamental consequence of Theorem \ref{thm:dunford-petit} is that (see \cite[Corollary 2.58]{FoLe07}) a bounded sequence $(u_n)_n\subset L^1(X)$ weakly converges to $u\in L^1(X)$ if and only if  
\begin{align*}
\int_{A}u_n(x)\d\mu(x)\xrightarrow{n\to\infty}\int_{A}u(x)\d\mu(x)\quad\text{for every measurable set $A\in \mathcal{A}$}.  
\end{align*}
In order to proof the main Theorem \ref{thm:uniform-weak-banach-saks} we need the following ancillary result. 
\begin{theorem}\label{thm:eps-uniform-cesaro}
Let $u_n\rightharpoonup 0$ in $L^1(X)$ then for $\eps>0$ there is a subsequence $(n_{\eps, j})_j \equiv (n_j)_j$ such that 
\begin{align*}
\limsup_{j\to \infty}\sup_{\stackrel{\theta: \mathbb{N}\to\mathbb{N},\, s.t. }{\theta(\tau)<\theta(\tau+1)}} \Big\| \frac{1}{j} \sum_{k=1}^j u_{n_{\theta(k)}}\Big\|_{L^1(X)}\leq \eps. 
\end{align*}
\end{theorem}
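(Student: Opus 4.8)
The plan is to approximate each $u_n$ by a function that is bounded and supported on a set of finite measure, thereby reducing the problem to the Hilbert space situation already settled in Theorem \ref{thm:banach-saks-hilbert}. Since $u_n \rightharpoonup 0$, the Dunford--Pettis criterion (Theorem \ref{thm:dunford-petit}) guarantees that $(u_n)_n$ is bounded, say $\sup_n \|u_n\|_{L^1(X)} \le M$, uniformly integrable, and tight. Fix $\eps > 0$. By tightness I first choose a measurable set $E$ with $\mu(E) < \infty$ and $\int_{X\setminus E} |u_n|\,\d\mu < \eps/4$ for all $n$; then, since $\mu(E) < \infty$, Chebyshev's inequality $\mu(E\cap\{|u_n|>C\}) \le M/C$ combined with uniform integrability lets me pick a threshold $C > 0$ so that $\int_{E \cap \{|u_n| > C\}} |u_n|\,\d\mu < \eps/4$ for all $n$. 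Setting $v_n = u_n \mathbf{1}_{E \cap \{|u_n| \le C\}}$ and $w_n = u_n - v_n$, I obtain the uniform splitting $\|w_n\|_{L^1(X)} < \eps/2$.

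The point of this splitting is that $(v_n)_n$ lives in the Hilbert space $H = L^2(E)$: indeed $|v_n| \le C$ on a set of finite measure, so $\|v_n\|_{L^2(E)}^2 \le C^2 \mu(E)$, and $(v_n)_n$ is bounded in $H$. Applying the uniform Banach--Saks property of Hilbert spaces (Theorem \ref{thm:banach-saks-hilbert}) yields a subsequence $(v_{n_j})_j$ and an element $x \in H$, the weak $L^2(E)$-limit of that subsequence, for which
\begin{align*}
\sup_{\stackrel{\theta: \mathbb{N}\to\mathbb{N},\, s.t. }{\theta(\tau)<\theta(\tau+1)}} \Big\| \frac{1}{j}\sum_{k=1}^j v_{n_{\theta(k)}} - x \Big\|_{L^2(E)} \xrightarrow{j\to\infty} 0.
\end{align*}
Because $\mu(E) < \infty$, the Cauchy--Schwarz inequality upgrades this to the same uniform convergence in $L^1(E)$, hence in $L^1(X)$ since each $v_n$ (and $x$) is supported in $E$.

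The crucial estimate is then to bound $\|x\|_{L^1(X)}$, and here lies the main obstacle: because the truncation set $E \cap \{|u_n| \le C\}$ depends on $n$, the sequence $(v_n)_n$ need not converge weakly to $0$, so I cannot simply invoke $u_n \rightharpoonup 0$ to conclude $x = 0$. Instead, for $\phi \in L^\infty(E)$ (extended by $0$ off $E$) I write $\int_E v_n \phi\,\d\mu = \int_X u_n \phi\,\d\mu - \int_{E\cap\{|u_n|>C\}} u_n \phi\,\d\mu$; the first term tends to $0$ by the set-wise convergence following Theorem \ref{thm:dunford-petit} (extended to $L^\infty$ test functions via uniform approximation by simple functions), while the second is bounded in modulus by $\|\phi\|_{L^\infty}\,\eps/4$. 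Passing to the limit along the subsequence and testing against $\phi = \operatorname{sgn}(x)$ gives $\|x\|_{L^1(E)} \le \eps/4$; it is precisely the uniform integrability tail, not any vanishing of $x$, that controls this limit.

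Finally I assemble the pieces: for every strictly increasing $\theta$ the triangle inequality gives
\begin{align*}
\Big\| \frac{1}{j}\sum_{k=1}^j u_{n_{\theta(k)}} \Big\|_{L^1(X)} \le \Big\| \frac{1}{j}\sum_{k=1}^j v_{n_{\theta(k)}} - x \Big\|_{L^1(E)} + \|x\|_{L^1(X)} + \frac{1}{j}\sum_{k=1}^j \|w_{n_{\theta(k)}}\|_{L^1(X)},
\end{align*}
where the last sum is $< \eps/2$ uniformly in $\theta$ and $j$. Taking the supremum over $\theta$ and then $\limsup_{j\to\infty}$ makes the first term vanish uniformly (second paragraph) and leaves $\limsup_j \sup_\theta \|\tfrac1j\sum_k u_{n_{\theta(k)}}\|_{L^1(X)} \le \eps/4 + \eps/2 < \eps$, as required. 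I note that the uniformity over the subsequence maps $\theta$, which is the delicate feature of the statement, is inherited for free from Theorem \ref{thm:banach-saks-hilbert} and from the fact that the $w$-estimate is termwise.
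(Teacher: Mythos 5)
Your proposal is correct and follows essentially the same route as the paper's proof: Dunford--Pettis tightness and uniform integrability are used to split each $u_n$ into a bounded piece supported on a finite-measure set (handled via the uniform Banach--Saks property of Hilbert spaces, Theorem \ref{thm:banach-saks-hilbert}, and Cauchy--Schwarz) plus a remainder of $L^1$-norm at most a fixed fraction of $\eps$, with the weak limit of the truncated part controlled by the smallness of that remainder. The only cosmetic differences are the labelling of the two pieces (your $v_n$ is the paper's $u_n-v_n$) and that you bound the norm of the weak limit $x$ by direct duality testing against $L^\infty$ functions rather than by weak lower semicontinuity of the $L^1$-norm as in the paper.
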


\begin{proof}
The weak convergence $(u_n)_n$ is bounded say $\sup_{n\geq1}\|u_n\|_{L^1(X)}\leq r$. By tightness and uniform-integrability(see Theorem \ref{thm:dunford-petit}), consider $X_0\subset X$ with $\mu(X_0)<\infty$ and $\delta>0$  so that  we have 
\begin{align*}
&\sup_{n\geq1}\int_{X\setminus X_0}|u_n(x)|\d\mu(x)\leq \frac{\eps}{6}, \\
&\sup_{n\geq1}\int_{A}|u_n(x)|\d\mu(x)\leq \frac{\eps}{6}\quad \text{whenever $\mu(A)\leq \delta$}. 
\end{align*}
Define $A_{n,m}=\{x\in X_0\,:\, |u_n(x)|\geq m\}$ so that 
\begin{align*}
\sup_{n\geq1}|A_{n,m}|\leq \frac{1}{m} \sup_{n\geq1}\|u_n\|_{L^1(X)}\leq \frac{r}{m}\xrightarrow{m\to\infty}0.
\end{align*}
Next, choose $m_0$ such that $\sup_{n\geq1}|A_{n,m}|\leq \delta$   whenever $m\geq m_0$ so that
\begin{align*}
\sup_{n\geq1}\int_{A_{n,m_0}}|u_n(x)|\d\mu(x)\leq \frac{\eps}{6}. 
\end{align*}
Consider the sequence $(v_n)_n$ defined as follows
\begin{align*}
v_n(x)=
\begin{cases} 
u_n(x)& x\in A_{n,m_0}\cup X\setminus X_0, \\
0& x\in X_0\setminus A_{n,m_0}.
\end{cases}
\end{align*}
On the one hand, the sequence $(v_n)_n$ verifies the estimate 
\begin{align}\label{eq:xbound-vn}
\sup_{n\geq1}\|v_{n}\|_{L^1(X)}
\leq \sup_{n\geq1}\int_{X\setminus X_0}|v_{n}(x)|\d\mu(x) +  \sup_{n\geq1} \int_{A_{n, m_0}}|v_{n}(x)|\d\mu(x)<\frac{\eps}{3}.
\end{align} 
On the other hand, it  follows from the definition of $A_{n,m_0}$ that 
\begin{align*}
\sup_{n\geq1}\int_{X} |u_n(x)-v_n(x)|^2\d\mu(x)=
\sup_{n\geq1}
\int_{X_0\setminus A_{n,m_0}}
|u_n(x)|^2\d\mu(x)\leq \mu(X_0\setminus A_{n,m_0})m_0^2.
\end{align*}
That is the sequence $(u_n-v_n)_n$ is bounded in $L^2(X)$.   In view of Theorem \ref{thm:banach-saks-hilbert}, there is a subsequence $(n_j)_j$ and such that $(u_{n_j}-v_{n_j})_j$ converges  in the weak sense  in $L^2(X)$ to some $w$  and we also have 
\begin{align*}
\lim_{j\to\infty}	\sup_{\stackrel{\theta: \mathbb{N}\to\mathbb{N},\, s.t. }{\theta(\tau)<\theta(\tau+1)}} \Big\| \frac{1}{j} \sum_{k=1}^j (u_{n_{\theta(k)}}-v_{n_{\theta(k)}})-w\Big\|_{L^2(X)}=0.
\end{align*}
From the latter  we find that $(u_{n_{\theta(k)}}-v_{n_{\theta(k)}})=w=0$ a.e. on $X\setminus X_0$ and we deduce 
\begin{align*}
\sup_{\stackrel{\theta: \mathbb{N}\to\mathbb{N},\, s.t. }{\theta(\tau)<\theta(\tau+1)}} \Big\| \frac{1}{j} \sum_{k=1}^j (u_{n_{\theta(k)}}-v_{n_{\theta(k)}})-w\Big\|_{L^1(X)}
\leq \mu(X_0)^{1/2}  \sup_{\stackrel{\theta: \mathbb{N}\to\mathbb{N},\, s.t. }{\theta(\tau)<\theta(\tau+1)}} \Big\| \frac{1}{j} \sum_{k=1}^j (u_{n_{\theta(k)}}-v_{n_{\theta(k)}})-w\Big\|_{L^2(X)}\to0.
\end{align*}
Note that $w\in L^1(X)$ since $\|w\|_{L^1(X)}\leq \mu(X_0)^{1/2} \|w\|_{L^2(X)}$. Therefore, there is $j_0\geq1$ such that 
\begin{align}\label{eq:xx-eps-uniform-vn}
\sup_{\stackrel{\theta: \mathbb{N}\to\mathbb{N},\, s.t. }{\theta(\tau)<\theta(\tau+1)}} \Big\| \frac{1}{j} \sum_{k=1}^j (u_{n_{\theta(k)}}-v_{n_{\theta(k)}})-w\Big\|_{L^1(X)}\leq \frac{\eps}{3}\qquad\text{for every $j\geq j_0$}. 
\end{align}
Furthermore, for $g\in L^{\infty}(X)$ (in particular $g=\mathds{1}_A$, $A\in \mathcal{A}$) we have  $\| \mathds{1}_{X_0}g\|^2_{L^2(X)}\leq \mu(X_0)\|g\|^2_{L^{\infty}(X)}<\infty$ so that  $\mathds{1}_{X_0}g\in L^2(X)$. The weak convergence of the sequence $(u_{n_j}-v_{n_j} )_j$ in $L^2(X)$ implies 
\begin{align*}
\int_{X}(u_{n_{j}}-v_{n_{j}}-w)(x)g(x)\d\mu(x)
&= \int_{X_0}(u_{n_{j}}-v_{n_{j}}-w)(x) g(x)\d\mu(x)\\
&= \int_{X}(u_{n_{j}}-v_{n_{j}}-w)(x) (\mathds{1}_{X_0}g)(x)\d\mu(x)\xrightarrow{j\to\infty}0. 
\end{align*}
Therefore, $u_{n_j}-v_{n_j} \rightharpoonup w$   and $u_{n_j}\rightharpoonup 0$ weakly in  $L^1(X)$ and hence $v_{n_j} \rightharpoonup -w$ weakly in $L^1(X)$. The weak convergence in $L^1(X)$ and the estimate \eqref{eq:xbound-vn} yield  
\begin{align*}
\|w\|_{L^1(X)}= \|w\|_{L^1(X_0)}\leq \liminf_{j\to \infty}\|v_{n_j}\|_{L^1(X)}
<\frac{\eps}{3}.
\end{align*}
Altogether with the estimate \eqref{eq:xx-eps-uniform-vn}, for  every $j\geq j_0$ we arrive at the following estimate
\begin{align*}
\sup_{\stackrel{\theta: \mathbb{N}\to\mathbb{N},\, s.t. }{\theta(\tau)<\theta(\tau+1)}} \Big\| \frac{1}{j} \sum_{k=1}^j u_{n_{\theta(k)}}\Big\|_{L^1(X)}
&\leq 	\sup_{\stackrel{\theta: \mathbb{N}\to\mathbb{N},\, s.t. }{\theta(\tau)<\theta(\tau+1)}} \Big\| \frac{1}{j} \sum_{k=1}^j (u_{n_{\theta(k)}}-v_{n_{\theta(k)}})-w\Big\|_{L^1(X)}
\\&+ \sup_{\stackrel{\theta: \mathbb{N}\to\mathbb{N},\, s.t. }{\theta(\tau)<\theta(\tau+1)}} \frac{1}{j} \sum_{k=1}^j (\| v_{n_{\theta(k)}}\|_{L^1(X)}+ \|w\|_{L^1(X)})<\eps. 
\end{align*}
We finally obtain the sought estimate
\begin{align*}
\limsup_{j\to \infty}	\sup_{\stackrel{\theta: \mathbb{N}\to\mathbb{N},\, s.t. }{\theta(\tau)<\theta(\tau+1)}} \Big\| \frac{1}{j} \sum_{k=1}^j u_{n_{\theta(k)}}\Big\|_{L^1(X)}\leq \eps. 
\end{align*}
\end{proof}

\begin{proof}[Proof of Theorem \ref{thm:uniform-weak-banach-saks}]
We can assume $u=0$. By Theorem \ref{thm:eps-uniform-cesaro}, one  can iteratively  find a nested family of  subsequences $(n_{i,j})_j$,  $i\geq 1$ with the property that $(n_{i+1,j})_j$ is a subsequence of $(n_{i,j})_j$  and there holds 
\begin{align}\label{eq:x-uniform-cesaro}
\sup_{\stackrel{\theta: \mathbb{N}\to\mathbb{N},\, s.t. }{\theta(\tau)<\theta(\tau+1)}} \Big\| \frac{1}{j} \sum_{k=1}^j u_{n_{i, \theta(k)}}\Big\|_{L^1(X)}\leq \frac{1}{i}. 
\end{align}
\noindent In particular if we fix $\ell\geq1$  and a map $\theta:\mathbb{N}\to \mathbb{N}$ strictly increasing, it is clear that for each $k\geq 1$, $(n_{\theta(k+\ell),\theta(j)})_j$ is a subsequence of $(n_{\theta(\ell),j})_j$, namely  there is  $\theta^{\ell}_k:\mathbb{N}\to \mathbb{N}$  strictly increasing such that 
$n_{\theta(k+\ell),\theta(j)}=n_{\theta(\ell),\theta^{\ell}_{k}(j)}$, $j\geq1$.  Observing that  
\begin{align*}
n_{\ell, \theta^{\ell}_{k+1}(j)}
= n_{\theta(k+\ell+1),\theta(j)}
=n_{\theta(k+\ell),\theta^{k+\ell}_{1}(\theta(j))}
= n_{\ell,\theta^{\ell}_{k}\circ\theta^{k+\ell}_{1}(j)}, 
\end{align*}
we can legitimately identify $\theta^{\ell}_{k+1}(j)= \theta^{\ell}_{k}(\theta^{k+\ell}_{k}(j))$. We have  $\theta^{\ell}_{k+1}(j)= \theta^{\ell}_{k}(\theta^{k+\ell}_{1}(j))\geq \theta^{\ell}_{k}(j)$, since $\theta^{k+\ell}_1(j)\geq j$. Hence, taking $j=k+\ell+1$ then as $\theta^{\ell}_{k}$ is strictly increasing,  we get 
\begin{align*}
\theta^{\ell}_{k+1}(k+\ell+1)\geq \theta^{\ell}_{k}(k+\ell+1)>\theta^{\ell}_{k}(k+\ell)
\end{align*}
In other words the mapping $\theta^*:\mathbb{N}\to \mathbb{N}$ with $\theta^{*}(k)=\theta^{\ell}_{k}(k+\ell)$ is strictly increasing. As a result, 
taking into account $ n_{\theta(k+\ell), \theta(k+\ell)}= n_{\theta(\ell),\theta^{\ell}_{k}(k+\ell)} = n_{\theta(\ell),\theta^{*}(k)}$ and the estimate \eqref{eq:x-uniform-cesaro} one deduces the following 
\begin{align*}
\limsup_{j\to \infty}\Big\| \frac{1}{j} \sum_{k=1}^j u_{n_{\theta(k+\ell), \theta(k+\ell)}}\Big\|_{L^1(X)}
&= \limsup_{j\to \infty}\Big\| \frac{1}{j} \sum_{k=1}^j u_{n_{\theta(\ell), \theta^*(k)}}\Big\|_{L^1(X)}\\
&\leq\limsup_{j\to \infty} \sup_{\stackrel{\theta: \mathbb{N}\to\mathbb{N},\, s.t. }{\theta(\tau)<\theta(\tau+1)}} \Big\| \frac{1}{j} \sum_{k=1}^j u_{n_{\theta(\ell), \theta(k)}}\Big\|_{L^1(X)}<\frac{1}{\theta(\ell)}\leq \frac{1}{\ell}, 
\end{align*}
where recall that $\theta(j)\geq j$, $j\geq1$.  On the other hand,  we have 
\begin{align*}
\Big\| \frac{1}{j} \sum_{k=1}^j u_{n_{\theta(k),\theta(k)}}\Big\|_{L^1(X)}
&\leq 
\Big\| \frac{1}{j} \sum_{k=1}^{\ell} u_{n_{\theta(k), \theta(k)}}\Big\|_{L^1(X)}
+\Big\| \frac{1}{j-\ell} \sum_{k=\ell+1}^j u_{n_{\theta(k),\theta(k)}}\Big\|_{L^1(X)} \\
&=\Big\| \frac{1}{j} \sum_{k=1}^{\ell} u_{n_{\theta(k),\theta(k)}}\Big\|_{L^1(X)}
+\Big\| \frac{1}{j-\ell} \sum_{k=1}^{j-\ell}u_{n_{\theta(k+\ell), \theta(k+\ell)}}\Big\|_{L^1(X)}. 
\end{align*}
Using this and the fact that $\sum_{k=1}^\ell\|u_{n_{\theta(k),\theta(k)}}\|_{L^1(X)}\leq \ell \sup_{n\geq1} \|u_{n}\|_{L^1(X)}\leq \ell r$ we get 
\begin{align*}
\limsup_{j\to \infty}  \sup_{\stackrel{\theta: \mathbb{N}\to\mathbb{N},\, s.t. }{\theta(\tau)<\theta(\tau+1)}}\Big\| \frac{1}{j} \sum_{k=1}^j u_{n_{\theta(k),\theta(k)}}\Big\|_{L^1(X)}
&\leq \limsup_{j\to \infty} \sup_{\stackrel{\theta: \mathbb{N}\to\mathbb{N},\, s.t. }{\theta(\tau)<\theta(\tau+1)}}
\Big\| \frac{1}{j-\ell} \sum_{k=1}^{j-\ell}u_{n_{\theta(k+\ell),\theta(k+\ell)}}\Big\|_{L^1(X)} \\
&= \limsup_{j\to \infty} \sup_{\stackrel{\theta: \mathbb{N}\to\mathbb{N},\, s.t. }{\theta(\tau)<\theta(\tau+1)}}\Big\| \frac{1}{j} \sum_{k=1}^{j}u_{n_{\theta(k+\ell),\theta(k+\ell)}}\Big\|_{L^1(X)}\leq  \frac{1}{\ell}. 
\end{align*}
Given that $\ell\geq1$ is arbitrary, letting $\ell\to \infty$ yields 
\begin{align*}
\lim_{j\to \infty} \sup_{\stackrel{\theta: \mathbb{N}\to\mathbb{N},\, s.t. }{\theta(\tau)<\theta(\tau+1)}} \Big\| \frac{1}{j} \sum_{k=1}^j u_{n_{\theta(k),\theta(k)}}\Big\|_{L^1(X)}= \limsup_{j\to \infty}\sup_{\stackrel{\theta: \mathbb{N}\to\mathbb{N},\, s.t. }{\theta(\tau)<\theta(\tau+1)}}\Big\| \frac{1}{j} \sum_{k=1}^j u_{n_{\theta(k),\theta(k)}}\Big\|_{L^1(X)}=0. 
\end{align*}
Finally  the desired subsequence $(u_{n_j})_j$ is obtained by taking that diagonal sequence $ u_{n_j}= u_{n_{j,j}}$. 
\end{proof}

\appendix
\section{} \label{sec:Appendix}
\begin{theorem}[S. Kakutani \cite{Kak39}] \label{thm:banach-saks-kakut}
A uniformly convex Banach  space $X$ has Banach-Saks property.
\end{theorem}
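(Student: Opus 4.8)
The plan is to follow the template of the Hilbert-space argument in Theorem \ref{thm:banach-saks-hilbert}, replacing the inner product by norming functionals and the parallelogram identity by the modulus of convexity. First I would invoke the Milman--Pettis theorem: a uniformly convex Banach space is reflexive. Consequently every bounded sequence $(x_n)_n\subset X$ has a weakly convergent subsequence; after passing to it and subtracting the weak limit, I may assume $x_n\rightharpoonup 0$ with $\sup_n\|x_n\|_X\le r$. If $\liminf_n\|x_n\|_X=0$ I pass to a norm-null subsequence and the Ces\`aro means converge trivially, so I may further assume the norms are bounded below. Thus reflexivity plays here exactly the role that the Dunford--Pettis criterion (Theorem \ref{thm:dunford-petit}) played for $L^1$: it furnishes the weak limit that organizes the whole argument.

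Next I would introduce the modulus of convexity $\delta_X(\eps)=\inf\{1-\|\tfrac{x+y}{2}\|_X:\ \|x\|_X,\|y\|_X\le 1,\ \|x-y\|_X\ge \eps\}$, which is strictly positive for $\eps>0$ precisely because $X$ is uniformly convex. The subsequence $(x_{n_k})_k$ is then built inductively: having selected $x_{n_1},\dots,x_{n_{k-1}}$ with partial sum $S_{k-1}$, I choose by Hahn--Banach a norming functional $f_{k-1}$, $\|f_{k-1}\|_{X'}=1$, with $f_{k-1}(S_{k-1})=\|S_{k-1}\|_X$, and then use $x_n\rightharpoonup 0$ to pick $n_k>n_{k-1}$ with $|f_{k-1}(x_{n_k})|$ as small as I like. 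This is the analogue of the near-orthogonality $|(x_{n_k},x_{n_i})_H|\le\tfrac{1}{i+1}$ exploited in Theorem \ref{thm:banach-saks-hilbert}: writing $A_{k-1}=S_{k-1}/(k-1)$ for the running average, the relation $f_{k-1}(A_{k-1})=\|A_{k-1}\|_X$ together with $f_{k-1}(x_{n_k})\approx 0$ forces $\|A_{k-1}-x_{n_k}\|_X\gtrsim\|A_{k-1}\|_X$, i.e.\ a new term genuinely separated from the current average whenever that average is not already small.

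The crux, and the step I expect to be the main obstacle, is to convert this separation into a quantitative bound $\|S_k\|_X=o(k)$ on the partial sums (equivalently $\|A_k\|_X\to 0$), which is what yields $\|\overline{x}_{n_j}-x\|_X\to 0$. Writing $A_k$ as the convex combination $(1-\tfrac1k)A_{k-1}+\tfrac1k x_{n_k}$ and invoking the weighted form of uniform convexity, each step that starts from $\|A_{k-1}\|_X\ge\eps$ contributes a definite convexity gain proportional to $\tfrac1k\delta_X(\eps)$. The difficulty is that this per-step gain is small and, unlike in the Hilbert case, the modulus need not be of power type, so one cannot simply square and telescope as in Theorem \ref{thm:banach-saks-hilbert}. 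I would therefore accumulate the gains by a contradiction argument: assuming $\limsup_k\|A_k\|_X\ge\eps$, the separation estimate makes the convexity decrements along a stretch of indices sum to more than the total norm available, forcing $\limsup_k\|A_k\|_X\le\eps$ for every $\eps>0$ and hence $\|A_k\|_X\to 0$. In the special case $X=L^p$, $1<p<\infty$, Clarkson's inequalities give a power-type modulus and the whole scheme collapses to the clean $O(\sqrt{k})$ estimate of Theorem \ref{thm:banach-saks-lp}; the general uniformly convex case simply requires running the same bookkeeping with $\delta_X$ in place of an explicit power.
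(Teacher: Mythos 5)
Your reduction (Milman--Pettis for reflexivity, passing to a weakly null subsequence, norming functionals giving $\|A_{k-1}-x_{n_k}\|_X\gtrsim\|A_{k-1}\|_X$) is fine, but the step you yourself flag as the crux is a genuine gap, not a bookkeeping issue. Weighted uniform convexity controls a convex combination relative to the \emph{larger} of the two norms: if $\|u\|_X,\|v\|_X\le R$ and $\|u-v\|_X\ge\eps$, then $\|(1-\lambda)u+\lambda v\|_X\le R\,\bigl(1-2\lambda(1-\lambda)\delta_X(\eps/R)\bigr)$. In your recursion $u=A_{k-1}$, $v=x_{n_k}$, the relevant $R$ is $\max(\|A_{k-1}\|_X,\|x_{n_k}\|_X)$, which is of order $r$; each step therefore yields only $\|A_k\|_X\le r\bigl(1-\tfrac{c}{k}\delta_X(\eps/r)\bigr)$, an absolute bound that does not involve $\|A_{k-1}\|_X$ and hence does not telescope. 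There is no monotone quantity whose ``decrements'' you are summing, so the proposed contradiction argument does not close: the available estimates are perfectly consistent with $\|A_k\|_X\equiv\eps$ for all $k$. The bound you would actually need, $\|A_k\|_X\le(1-\tfrac1k)\|A_{k-1}\|_X+\tfrac1k\,o(1)$, is false in general: if $f_{k-1}$ norms $A_{k-1}$ and $f_{k-1}(x_{n_k})=0$, the perturbation $x_{n_k}$ is merely tangent to the supporting hyperplane, and tangential perturbations can increase the norm at first order unless the norm is (uniformly) smooth --- a property \emph{not} implied by uniform convexity. The Hilbert and $L^p$ cases escape this only because squaring, respectively Clarkson/Taylor expansion of $t\mapsto|1+t|^p$, gives an exact identity for $\|S_k\|^2$ (resp.\ $\|S_k\|^p_{L^p}$) in which the new term enters with a summable remainder; for a general modulus no such expansion exists.

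For comparison, the paper does not attempt this direct averaging at all: it reduces Theorem \ref{thm:banach-saks-kakut} to Theorem \ref{thm:banach-saks-Okada} via Enflo's renorming theorem (uniform convexifiability passes to the dual) and then runs Okada's argument, in which the duality map $\varphi$ --- homogeneous, monotone, and uniformly continuous on bounded sets because $X'$ is uniformly convex --- replaces the inner product and the telescoping is performed on $\|S_j\|_X^2$ through the identity $(\varphi(x),x)=\|x\|_X^2$. If you want to keep a purely geometric argument in $X$ itself, the classical repair is Kakutani's dyadic pairing (the proof in Diestel cited by the paper): from a weakly null sequence with $\limsup_n\|x_n\|_X=c>0$ one extracts disjoint pairs with $\|x_{n_{2i-1}}-x_{n_{2i}}\|_X\ge c/2$, so their midpoints form a new weakly null sequence whose norms have $\limsup$ at most $(1-\delta_X(1/3))\,c$; iterating and diagonalizing kills the averages over blocks of length $2^m$ geometrically, and the general Ces\`aro mean is then controlled by the nearest dyadic one. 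Alternatively, Pisier's theorem provides an equivalent norm with modulus of power type, after which your scheme does reduce to the $L^p$-style telescoping, but that is a far deeper input than the theorem warrants. As written, your proof is incomplete at exactly the point where a new idea is required.
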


\noindent The result by P. Enflo \cite[Corollary 4]{Enf72} implies  that a Banach  space  is uniformly convexfiable\footnote{A Banach space is uniformly convexifiable if it can be equipped with an equivalent norm that renders it uniformly convex.} if and only if its dual is uniformly convexifiable. Thus Theorem \ref{thm:banach-saks-kakut} can be recast as the following Theorem \ref{thm:banach-saks-Okada}. 
\begin{theorem}[N. Okada \cite{Oka84}] \label{thm:banach-saks-Okada}
A Banach  space $X$ whose dual $X'$ is uniformly convex has the Banach-Saks property.
\end{theorem}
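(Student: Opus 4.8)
The plan is to obtain Theorem \ref{thm:banach-saks-Okada} directly from Kakutani's Theorem \ref{thm:banach-saks-kakut} together with the Enflo renorming result quoted above, exploiting that the Banach-Saks property involves only two ingredients, namely boundedness of a sequence and strong (Ces\'aro) convergence of a selected subsequence, both of which are insensitive to replacing a norm by an equivalent one.

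First I would note that if $X'$ is uniformly convex, then a fortiori $X'$ is uniformly convexifiable. By Enflo's result, uniform convexifiability transfers between a space and its dual, so $X$ itself is uniformly convexifiable: there is a norm $\vertiii{\cdot}$ on $X$, equivalent to the original norm $\|\cdot\|_X$, under which $(X,\vertiii{\cdot})$ is uniformly convex. Kakutani's Theorem \ref{thm:banach-saks-kakut} then applies to $(X,\vertiii{\cdot})$ and yields that this renormed space has the Banach-Saks property.

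It remains to carry this property back to the original norm. Since $\vertiii{\cdot}$ and $\|\cdot\|_X$ are equivalent, there are constants $0<c\leq C$ with $c\|x\|_X\leq\vertiii{x}\leq C\|x\|_X$ for every $x\in X$; hence a sequence is bounded for one norm precisely when it is bounded for the other, and the Ces\'aro means $\overline{x}_{n_j}$ converge strongly to a limit in one norm exactly when they do in the other. Thus the subsequence produced by the Banach-Saks property of $(X,\vertiii{\cdot})$ works verbatim for $(X,\|\cdot\|_X)$, and $X$ has the Banach-Saks property. The argument is nearly immediate once Enflo's theorem is invoked; the only step demanding (routine) care is this invariance of the Banach-Saks property under equivalent renormings, whereas the substantive input, Enflo's characterization of uniform convexifiability for a space and its dual, is used here as a black box.
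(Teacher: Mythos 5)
Your argument is correct, but it is not the proof the paper gives; it is essentially the observation the paper itself makes in the sentence preceding the theorem ("Thus Theorem \ref{thm:banach-saks-kakut} can be recast as Theorem \ref{thm:banach-saks-Okada}"). You pass through Enflo's renorming theorem: since $X'$ is uniformly convex it is uniformly convexifiable, Enflo transfers this to $X$, Kakutani's Theorem \ref{thm:banach-saks-kakut} gives the Banach-Saks property for the renormed space, and the property survives equivalent renorming because both boundedness and strong convergence of Ces\'aro means do. All of these steps are sound, and your care about the renorming invariance is the right (if routine) point to flag. The paper, by contrast, gives Okada's direct and self-contained argument: it uses the duality mapping $\varphi:X\to X'$, whose uniform continuity on bounded sets is exactly what uniform convexity of $X'$ supplies, selects the subsequence greedily so that $|(\varphi(S_{j-1}),x_{n_j})|\leq 1$, and derives $\|S_j/j\|_X\to 0$ from the monotonicity identity $(\varphi(x)-\varphi(y),x-y)\geq 0$ together with a telescoping estimate on $\|S_j\|_X^2-\|S_{j-1}\|_X^2$. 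The trade-off is clear: your route is two lines modulo a deep black box (Enflo's characterization of uniform convexifiability, which is far harder than the theorem being proved), whereas the paper's route is longer but elementary, quantitative, and exhibits the subsequence explicitly without any renorming. Both are legitimate; just be aware that invoking Enflo makes your proof logically much heavier than the statement it establishes.
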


\noindent We present an elegant proof of Theorem \ref{thm:banach-saks-Okada} due to N. Okada \cite{Oka84} based on the duality mapping of $X$, viz., the map $\varphi:X\to X'$  verifying $\varphi(0)=0$ and 
\begin{align*}
(\varphi(x), x)= \|x\|_X \|\varphi(x)\|_{X'}=\|x\|^2_X, 
\end{align*}
where $(\cdot, \cdot)$ is the dual pairing of $X$ and $X'$.  The existence of $\varphi(x)\in X'$ is a consequence of the Hahn-Banach theorem, whereas  since $X'$ is uniformly convex, it can be readily shown that the uniqueness follows from the strict convexity of $X'$. 
Furthermore, the uniqueness implies that $\varphi(\lambda x)= \lambda\varphi(x)$, $\lambda\in \R$, while uniform convexity of $X'$ implies that $\varphi: X\to X'$ is uniformly continuous on bounded sets; see for instance \cite[Section 5.4]{Chi09} or  \cite{Kat67}.
\begin{proof}
As $X'$ uniform convex implies $X'$ is reflexive which is equivalent to says $X$. I	t suffices to prove the weak Banach-Saks property.
A sequence $(x_n)_n\subset $  weakly converging to $x$ in $X$ is bounded, say $(x_n)_n\subset B_X(0,r):=\{x\in X\,:\, \|x\|_X\leq r\}$ for some $r>0$. Without loss of generality, assume $x=0$. Put $x_{n_1}=x_1$ assume $x_{n_{j-1}}$ is given, $j\geq2$, and put $S_{j-1}=\sum_{k=1}^{j-1} x_{n_{k}}.$ 
Since $ \varphi(S_{j-1})\in X' $ and $(x_n)_n$ converges weakly to $x=0$, we choose $n_j>n_{j-1}$ and hence construct the sequence $(x_{n_j})_j$ such that 	$|(\varphi(S_{j-1}), x_{n_j})|\leq 1$.  Since $\varphi$ is uniformly continuous on $B_X(0,r)$ for $\eps>0$  there is $\delta>0$ such that $\|\varphi(x)-\varphi(y)\|_{X'}<\eps/r$ wherever $x,y\in B_X(0,r)$ and $\|x-y\|_X\leq \delta$. Fix
$j>j_0$ with $j_0\geq r/\delta,$ then $	\frac{1}{j}\|S_{j}-S_{j-1}\|_{X}\leq \frac{r}{j_0}\leq \delta$. Whence,  we have 
\begin{align*}
\big|\big( \varphi\big(\frac{S_{j}}{j}\big)-\varphi\big(\frac{S_{j-1}}{j}\big), S_{j} -S_{j-1}\big)\big|
&\leq \|x_{n_j}\|_{X}\big\|\varphi\big(\frac{S_{j}}{j}\big)
-\varphi\big(\frac{S_{j-1}}{j}\big)\big\|_{X'}\leq \frac{r\eps}{r}=\eps. 
\end{align*}
That is, using the formula $\varphi(\lambda x)= \lambda\varphi(x)$ we obtain  
\begin{align*}
\big|\big(\varphi(S_{j})-\varphi(S_{j-1}),S_{j} -S_{j-1}\big)\big|
\leq j\eps. 
\end{align*}
The relations $(\varphi(x), x)= \|x\|_X \|\varphi(x)\|_{X'}$ and $\|x\|_X= \|\varphi(x)\|_{X'}$ yield
\begin{align*}
\big(\varphi(x)-\varphi(y),x-y\big)
&=\big(\|x \|_X -\|y \|_{X}\big)^2_X+ \big[\|x \|_X \|y \|_{X}-(\varphi(x), y)\big]+  \big[\|x\|_X \|y \|_{X}-(\varphi(y),x)\big]. 
\end{align*}
Taking into account the fact that, $(\varphi(x), y)\leq\|\varphi(x)\|_{X'}\|y\|_X= \|x\|_X\|y\|_X$, it follows that  
\begin{align*}
\big( \varphi(x)-\varphi(y), x-y\big)
\geq  \big[\|x\|_X \|y \|_{X}-(\varphi(y),x)\big]\geq0. 
\end{align*}
Accordingly, since $S_{j}-S_{j-1}=x_{n_j}$ we find that 
\begin{align*}
0\leq\|S_{j} \|_X \|S_{j-1} \|_{X} - \|S_{j-1} \|^2_{X}-(\varphi(S_{j-1}), x_{n_j})
&\leq \big(\varphi(S_{j})-\varphi(S_{j-1}), S_{j} -S_{j-1}\big)\leq j\eps.
\end{align*}
Given that $(\varphi(S_{j-1}), x_{n_j})\leq 1$, by definition of $x_{n_j}$ we get 
\begin{align*}
\|S_{j} \|_X \|S_{j-1} \|_{X} - \|S_{j-1} \|^2_{X}
\leq j\eps+ (\varphi(S_{j-1}), x_{n_j})\leq j\eps+1. 
\end{align*}
It follows that,  for all $j>j_0$ we have 
\begin{align*}
\|S_{j} \|^2_X  - \|S_{j-1} \|^2_{X}&= \big(	\|S_{j} \|_X - \|S_{j-1} \|_{X}\big)^2+ 2	\big( \|S_{j} \|_X \|S_{j-1} \|_{X} - \|S_{j-1} \|^2_{X}\big)\\
&\leq  \|x_{n_j} \|^2_{X}+ 2	( j\eps+1)\leq r^2+ 2( j\eps+1) . 
\end{align*}
Whence, summing both side gives 
\begin{align*}
\|S_{j} \|^2_X  - \|S_{j_0} \|^2_{X}= \sum_{k=j_0+1}^j \|S_{k} \|^2_X  - \|S_{k-1} \|^2_{X}
\leq jr^2+ 2j( j\eps+1).  
\end{align*}
This implies that 
\begin{align*}
\limsup_{j\to \infty}\frac{1}{j^2}	\|S_{j} \|^2_X \leq\limsup_{j\to \infty} \frac{1}{j^2} \|S_{j_0} \|^2_{X}+ \frac{r^2}{j} + 2\eps+\frac{2}{j}\leq 2\eps. 
\end{align*}
Finally, as $\eps>0$ is arbitrarily chosen we get 
\begin{align*}
\lim_{j\to \infty}		\|\frac{S_{j}}{j} \|^2_X = 	\limsup_{j\to \infty}		\|\frac{S_{j}}{j} \|^2_X =0. 
\end{align*}
\end{proof}

\noindent It is well-known that \cite{Cla36} both $L^p(X)$ and its dual space $(L^{p}(X))'\equiv L^{p'}(X)$ with $p'=\frac{p}{p-1}$ are uniformly convex when $1<p<\infty$. A  proof of the uniform convexity of $L^p(X)$, $1<p<\infty$ can be found in  \cite[Theorem 5.4.2.]{Wil13} or see also \cite{Shi18} for a short proof. Hence both Theorem \ref{thm:banach-saks-kakut} and Theorem \ref{thm:banach-saks-Okada} imply that  $L^p(X)$, $1<p<\infty$ satisfies the Banach-Saks property. However we present here a simple proof due to Banach and Saks. 

\begin{theorem}[{Banach-Saks \cite{BaSa30}}] \label{thm:banach-saks-lp} The space $L^p(X)$, $1<p<\infty$  satisfies the Banach-Saks property. 
\end{theorem}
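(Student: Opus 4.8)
The plan is to follow the original Banach--Saks scheme, which parallels the Hilbert space argument of Theorem \ref{thm:banach-saks-hilbert} but replaces the inner product by the duality between $L^p(X)$ and $L^{p'}(X)$, where $p'=\tfrac{p}{p-1}$. Since $1<p<\infty$, the space $L^p(X)$ is uniformly convex, hence reflexive, so every bounded sequence $(u_n)_n$ with $\sup_n\|u_n\|_{L^p(X)}\le r$ admits a weakly convergent subsequence; after passing to it and subtracting the limit I may assume $u_n\rightharpoonup 0$. Writing $S_j=\sum_{k=1}^j u_{n_k}$ for the partial sums of the subsequence still to be constructed, the goal is to choose $(n_j)_j$ so that $\|S_j\|_{L^p(X)}=o(j)$, for then $\|\overline{u}_{n_j}\|_{L^p(X)}=\tfrac1j\|S_j\|_{L^p(X)}\to 0$.

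The engine of the construction is a pair of elementary pointwise inequalities for real numbers $a,b$: for $p\ge 2$,
\begin{equation*}
\bigl||a+b|^p-|a|^p-p\,a|a|^{p-2}b\bigr|\le C_p\bigl(|a|^{p-2}b^2+|b|^p\bigr),
\end{equation*}
while for $1<p\le 2$,
\begin{equation*}
\bigl||a+b|^p-|a|^p-p\,a|a|^{p-2}b\bigr|\le C_p|b|^p,
\end{equation*}
both of which follow from the degree-$p$ homogeneity in $(a,b)$ together with a one-variable estimate of $|1+t|^p-1-pt$ near $t=0$ and near $t=\pm\infty$. I would apply these with $a=S_{j-1}(x)$ and $b=u_{n_j}(x)$ and integrate. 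The first-order term is $p\int_X |S_{j-1}|^{p-2}S_{j-1}\,u_{n_j}\,\d\mu$; since $|S_{j-1}|^{p-2}S_{j-1}\in L^{p'}(X)$ with $\||S_{j-1}|^{p-2}S_{j-1}\|_{L^{p'}(X)}=\|S_{j-1}\|_{L^p(X)}^{p-1}<\infty$, the weak convergence $u_n\rightharpoonup 0$ in $L^p(X)=(L^{p'}(X))'$ lets me choose $n_j>n_{j-1}$ so that this term is at most $\eps_j$ in modulus, with $(\eps_j)_j$ fixed summable. Setting $a_j:=\|S_j\|_{L^p(X)}$, this yields the recursion
\begin{equation*}
a_j^p\le a_{j-1}^p+p\eps_j+C_p\|u_{n_j}\|_{L^p(X)}^p+C_p\int_X|S_{j-1}|^{p-2}u_{n_j}^2\,\d\mu
\end{equation*}
in the range $p\ge 2$, and the same estimate without the last integral when $1<p<2$.

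The case $1<p<2$ is then immediate: bounding $\|u_{n_j}\|_{L^p(X)}\le r$ and summing gives $a_j^p\le Cj$, hence $a_j=O(j^{1/p})$ and $a_j/j\to 0$ because $1/p<1$. The \emph{main obstacle} is the second-order term when $p>2$, which cannot be made small through the choice of $n_j$; here I would estimate it by H\"older's inequality with exponents $\tfrac{p}{p-2}$ and $\tfrac p2$, obtaining $\int_X|S_{j-1}|^{p-2}u_{n_j}^2\,\d\mu\le \|S_{j-1}\|_{L^p(X)}^{p-2}\,r^2=a_{j-1}^{p-2}r^2$. The recursion becomes $a_j^p\le a_{j-1}^p+p\eps_j+C_pr^p+C_pr^2a_{j-1}^{p-2}$, and I would close it by proving $a_j\le K\,j^{1/2}$ for a suitable $K$ by induction: inserting $a_{k-1}^{p-2}\le K^{p-2}(k-1)^{(p-2)/2}$ and comparing $\sum_{k\le j}(k-1)^{(p-2)/2}$ with $\int_0^j t^{(p-2)/2}\,\d t=\tfrac2p\,j^{p/2}$ shows that the leading term is controlled as soon as $K^2\ge 2C_pr^2/p$, the constant and linear contributions being of lower order $o(j^{p/2})$ for $p>2$ and absorbed for large $j$ and by enlarging $K$. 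This delivers $a_j=O(j^{1/2})=o(j)$, while the borderline case $p=2$ is exactly the Hilbert space estimate of Theorem \ref{thm:banach-saks-hilbert}. In every case $\|\overline{u}_{n_j}\|_{L^p(X)}\to0$, which is the assertion.
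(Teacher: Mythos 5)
Your proposal is correct and follows essentially the same route as the paper's proof: reduce to the weak property by reflexivity, choose the subsequence greedily so that the duality pairing $\int_X|S_{j-1}|^{p-2}S_{j-1}\,u_{n_j}\,\d\mu$ is small, and telescope a pointwise Taylor-type inequality for $|a+b|^p$. The only real divergence is the treatment of the remainder for $p>2$: you control $\int_X|S_{j-1}|^{p-2}u_{n_j}^2\,\d\mu$ by H\"older's inequality and a bootstrap induction giving $\|S_j\|_{L^p(X)}=O(j^{1/2})$, whereas the paper keeps the full binomial remainder and bounds $\|S_{j-1}\|_{L^p(X)}\le jr$ crudely to get $O(j^{1-1/p})$ --- both rates are $o(j)$, so both close the argument.
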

\begin{proof}
Since $L^p(X)$ is reflexive, it suffices to prove the weak Banach-Saks property. Let  $(u_n)_n\subset L^p(X)$  a bounded sequence, say $\sup_{n\geq1}\|u_n\|_{L^p(X)}\leq r$, $r>0$, weakly converging  to $u\in L^p(X)$ that is 
\begin{align*}
\int_X(u_n(x)-u(x))v(x)\d\mu(x) \xrightarrow{n\to\infty}0 \qquad\text{for all $v\in L^{p'}(X)$}. 
\end{align*}
Without loss of generality, assume $u=0$. Put $u_{n_1}=x_1$ assume $u_{n_{j-1}}$ is given, $j\geq2$. 
Define $S_{j-1}=\sum_{k=1}^{j-1} u_{n_{k}}\in L^p(X)$ so  that $ | S_{j-1}|^{p-2} S_{j-1}\in L^{p'}(X) $. Since $(u_n)_n$ converges weakly to $u=0$, we choose $n_j>n_{j-1}$ and hence construct the sequence $(u_{n_j})_j$ such that 
\begin{align*}
\int_{X}| S_{j-1}(x)|^{p-2} S_{j-1}(x)  u_{n_j}(x)\d\mu(x) \leq 1.
\end{align*}
Next, consider the  continuous function $\zeta:\R\to \R$, 
\begin{align*}
\zeta(t)=\frac{|1+t|^p-  \sum_{k=0}^{\lfloor p\rfloor}\binom{p}{k}t^{k} }{|t|^p}
\end{align*}
where $\lfloor p\rfloor=\max\{m\in\mathbb{N}\,:\, m\leq p\}$ and $\binom{p}{k}=\frac{\Gamma(p+1)}{k!\Gamma(p-k+1)}$. Note that, if $\lfloor p\rfloor<p$ then $\zeta(t)\xrightarrow{|t|\to\infty}1$  and if $p=\lfloor p\rfloor$  then $\zeta(t)\xrightarrow{|t|\to\infty}0$,   
whereas,  using Taylor's expansion for $|t|<1$ we deduce
\begin{align*}
\zeta(t)=\frac{(1+t)^p-  \sum_{k=0}^{\lfloor p\rfloor}\binom{p}{k}t^{k} }{|t|^p}
= \frac{\sum_{k=\lfloor p\rfloor+1}^\infty\binom{p}{k}t^{k} }{|t|^p}\xrightarrow{|t|\to0}0.
\end{align*}
Therefore, the map $t\mapsto\zeta(t)$ is bounded say $|\zeta(t)|\leq C$ for some $C>1$, yielding 
\begin{align*}
|1+t|^p\leq C|t|^p+ \sum_{k=0}^{\lfloor p\rfloor}\binom{p}{k}t^{k}.
\end{align*}
This implies that for all $a,b\in\R$,  we have 
\begin{align*}
|a+b|^p\leq
\begin{cases}
|a|^p+ pb|a|^{p-2}a+C|b|^p+ \sum_{k=2}^{\lfloor p\rfloor}\binom{p}{k}|a|^{p-k}|b|^{k} & \text{if}\,\, p\geq2\\
|a|^p+ pb|a|^{p-2}a+C|b|^p &\text{if}\,\, 1<p<2.
\end{cases} 
\end{align*}
The foregoing inequality with $a= S_{j-1}$ and $b=u_{n_j}$ yields 
\begin{align*}
\| S_j\|^p_{L^p(X)}-\| S_{j-1}\|^p_{L^p(X)}
& \leq  p\int_{X} | S_{j-1}(x)|^{p-2} S_{j-1}(x) u_{n_j} (x)\d \mu(x)+ C\|u_{n_j}\|^p_{L^p(X)} +R_j\\
&\leq p +Cr^p+ R_j,
\end{align*}
where we define the remainder 
\begin{align*}
R_j=
\begin{cases}
\sum_{k=2}^{\lfloor p\rfloor}\binom{p}{k} \int_{X} |S_{j-1}(x)|^{p-k}|u_{n_j}(x)|^{k}\d\mu(x)& \text{if}\,\, p\geq2\\
	0&\text{if}\, \, 1<p<2.
\end{cases} 
\end{align*}
In  such a way that 
\begin{align}\label{eq:xxRk-estimate}
\| S_j\|^p_{L^p(X)}-\| S_{1}\|^p_{L^p(X)} =  \sum_{k=2}^j	\| S_k\|^p_{L^p(X)}-\| S_{k-1}\|^p_{L^p(X)}
&< (j-1)(p +Cr^p)+ \sum_{k=2}^j R_k. 
\end{align}
Using $\|u_n\|_{L^p(X)}\leq r$ and H\"older inequality, we obtain  for $2\leq k\leq p$, 
\begin{align*}
\int_{X} |S_{j-1}(x)|^{p-k}|u_{n_j}(x)|^{k}\d\mu(x)
&\leq  \|u_{n_j}\|^k_{L^p(X)} \big\|\sum_{i=1}^{j-1} u_{n_{i}} \big\|^{p-k}_{L^p(X)}
< j^{p-2}r^p. 
\end{align*}
Summing up both sides gives, for $p\geq2$,
\begin{align*}
R_j= \sum_{k=2}^{\lfloor p\rfloor}\binom{p}{k} \int_{X} |S_{j-1}(x)|^{p-k}|u_{n_j}(x)|^{k}\d\mu(x)
\leq \sum_{k=2}^{\lfloor p\rfloor}\binom{p}{k}j^{p-2}r^p = Bj^{p-2}r^p, 
\end{align*} 
wherefrom we deduce that $\sum_{k=2}^j R_k< Br^pj^{p-1}$ with $B=\sum_{k=2}^{\lfloor p\rfloor}\binom{p}{k}$. Inserting this in   \eqref{eq:xxRk-estimate} gives 
\begin{align*}
\| \frac{1}{j}S_j\|^p_{L^p(X)}
&< \| \frac{1}{j}S_{1}\|^p_{L^p(X)} +\frac{j-1}{j^p}(p +Cr^p)+ \sum_{k=2}^j R_k\\
&<\begin{cases}
\frac{r^p}{j^p}+\frac{1}{j^{p-1}}(p +Cr^p)+  \frac{1}{j}Br^p &\text{if}\,\, p\geq 2\\
	\frac{r^p}{j^p}+ \frac{1}{j^{p-1}}(p +Cr^p)&\text{if}\,\, 1<p<2. 
\end{cases}
\end{align*}
Whence, this implies that $\lim_{j\to\infty} \| \frac{1}{j}S_j\|_{L^p(X)}=0.$
\end{proof}

\vspace{-2mm}
\bibliographystyle{alpha}

\end{document}